\newtheorem{lemma}{Lemma}
\newtheorem{theorem}{Theorem}
\numberwithin{cor}{section}
\numberwithin{theorem}{section}
\numberwithin{lemma}{section}
\numberwithin{observation}{section}
\newcommand{\m}[1]{\ensuremath{\ (\text{\rm mod } #1)}}
\newcommand{\ud}{\, \mathrm{d}}
\DeclareMathOperator{\lcm}{lcm}
\DeclareMathOperator{\ord}{ord}
\def\R{\mathbb R}
\newcommand{\Abs}[1]{{\left|{#1}\right|}}
\newcommand{\p}{\mathbf p}
\newcommand{\Z}{\mathbb{Z}}
\title{On the error term of the logarithm of the lcm of a quadratic
sequence}
\author[J. Ru\'e]{Juanjo Ru\'e}
\address{J. Ru\'e: Instituto de Ciencias Matem\'aticas
(CSIC-UAM-UC3M-UCM),
28049 Madrid, Spain}
\email{juanjo.rue@icmat.es}
\author[P. \v Sarka]{Paulius \v Sarka}
\address{P. \v Sarka: Institute of Mathematics and Informatics,
Akademijos
4, Vilnius LT-08663, Lithuania and Department of Mathematics and
Informatics, Vilnius
University, Naugarduko 24, Vilnius LT-03225, Lithuania}
\email{paulius.sarka@gmail.com}
\author[A. Zumalacárregui]{Ana Zumalac\'arregui}
\address{A. Zumalacárregui: Instituto de Ciencias Matem\'aticas
(CSIC-UAM-UC3M-UCM) and Departamento de Matem\'aticas, Universidad
Aut\'onoma de Madrid, 28049 Madrid, Spain }
\email{ana.zumalacarregui@uam.es}
\begin{document}
\begin{abstract}
We study the logarithm of the least common multiple of the
sequence of integers given by $1^2+1, 2^2+1,\dots, n^2+1$. Using a result of
Homma~\cite{kosuke} on the distribution of roots of
quadratic polynomials modulo primes we calculate the error term for the asymptotics obtained by Cilleruelo~\cite{quadratic-cille}.
\end{abstract}
\maketitle
\section{Introduction}\label{sec:intro}

The first important attempt to prove the Prime Number Theorem was made
by Chebyshev. In 1853 [2] he introduced the function
$\psi(n)=\sum_{p^{m}\leq n}\log (p)$ and proved that
the Prime Number
Theorem was equivalent to the asymptotic estimate $\psi(n)\sim n$. He
also proved that if $\psi(n)/n$ had a limit as $n$ tends to infinity
then that limit is $1$. The proof of this result was only completed
(independently) two years after Chebyshev's death by Hadamard and de
la Vallée Poussin.

\smallskip

Observe that Chebyshev's function is precisely $\psi(n) =\log \lcm
\left(1,2,\dots,n \right)$, so it seems natural to consider the following question: for a given polynomial
$f(x)\in
\mathbb{Z}[x]$, what can be said about the $\log \lcm
\left(f(1),f(2),\dots,f(n) \right)$? As Hadamard and de la Vall\'ee
Poussin proved, for $f(x)=x$ this quantity asymptotically behaves as $n$. Some progress has been made in
the direction of generalising this result to a broader class of polynomials. In~\cite{linear} the authors use the Prime Number
Theorem for arithmetic progressions to get the asymptotic estimate for any
linear polynomial $f(x)=ax+b$:
$$\log \lcm \left(f(1),f(2),\dots,f(n) \right)\sim n \frac{q}{\varphi(q)}\sum_{\substack{k=1\\(k,q)=1}}^q\frac{1}{k},$$
where $q=a/\gcd(a,b)$. Recently,
Cilleruelo~\cite{quadratic-cille} extended this result to the
quadratic case, obtaining that for an irreducible polynomial $f(x)=ax^2+bx+c \in
\mathbb{Z}[x]$ the following asymptotic estimate holds:
\begin{equation}\label{cille1}
\log \lcm (f(1), f(2), \dots, f(n)) = n\log n +B_f n+o(n),
\end{equation}
where the constant $B_f$ is explicit. The author also
proves that for reducible polynomials of degree two, the asymptotic is linear in $n$.
For polynomials of higher degree nothing is known, except for products of
linear polynomials, which are studied in \cite{chinese-linear}.

\smallskip

An important ingredient in Cilleruelo's argument is a result of
Toth~\cite{toth}, a generalisation of a deep theorem of Duke, Friedlander and
Iwaniec~\cite{equidistribution-roots} about the distribution of
solutions of quadratic congruences $f(x) \equiv 0 \pmod p$, when $p$
runs over all primes. Recent improvements of the latter result in the negative
discriminant case~\cite{kosuke} allowed us to refine the method and obtain
the error term in the special case of expression \eqref{cille1}.

\smallskip

We focus our study on the particular polynomial $f(x)=x^2+1$, which
simplifies the calculation, and shows how the method developed
in~\cite{quadratic-cille} works in a clear manner.
The same ideas could be extended to general irreducible quadratic
polynomials of negative discriminant, however, a generalisation of
\cite{kosuke} (in the same direction as Toth's) would be necessary.

\smallskip

For this particular polynomial the expression for $B$ is given by
\begin{equation}\label{B:x^2+1}
\gamma-1-\frac{\log 2}{2}-\sum_{p\neq
2}\frac{\big(\frac{-1}{p}\big)\log p}{p-1}\approx -0.0662756342,
\end{equation}
where $\gamma$ is the Euler constant, $\big(\frac{-1}{p}\big)$ is
the Legendre symbol and the sum is considered over all odd prime
numbers ($B$ can be computed with high numerical precision by using
its expression in terms of L-series and
zeta-series, see~\cite{quadratic-cille} for details). More precisely,
we obtain the following estimate:
\begin{theorem}\label{theoremone}
For any $\theta<4/9$ we have
$$\log \lcm (1^2 +1, 2^2 +1, \dots, n^2 + 1) = n\log n
+Bn+O\bigg(\frac{n}{(\log n)^{\theta}}\bigg),$$
where the constant $B$ is given by Expression~\eqref{B:x^2+1}.
\end{theorem}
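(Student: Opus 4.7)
The plan is to use Cilleruelo's decomposition~\cite{quadratic-cille} of
\[
S(n) := \log \lcm(f(1), \dots, f(n)) = \sum_{p^k \in \Lom_n} \log p, \quad \Lom_n := \{p^k : p^k \mid j^2+1 \text{ for some } j \le n\},
\]
split at the threshold $p^k = n$ as $S(n) = S_{\le n} + S_{>n}$, and to refine the error term by replacing Toth's equidistribution with Homma's stronger discrepancy bound. For $f(x) = x^2+1$, Hensel's lemma implies $p^k \in \Lom_n$ with $p^k \le n$ iff $(p,k) = (2,1)$ or $p \equiv 1 \pmod 4$, in which case $x^2+1$ has exactly two simple roots modulo $p^k$ for every $k \ge 1$. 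Since every arithmetic progression of modulus $\le n$ meets $[1, n]$, Siegel--Walfisz yields
\[
S_{\le n} = \log 2 + \sum_{\substack{p^k \le n\\ p \equiv 1 \pmod 4}} \log p = \frac{n}{2} + O\bigl(n e^{-c \sqrt{\log n}}\bigr),
\]
comfortably below the target error.

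For $S_{>n}$, the $k=1$ part is the source of the term $n\log n$, and is attacked via the product identity
\[
\sum_{j \le n} \log(j^2+1) = \sum_p \log p \cdot \beta_p, \quad \beta_p := \sum_{j \le n} v_p(j^2+1),
\]
whose left-hand side equals $2n\log n - 2n + O(\log n)$ by Stirling. Evaluating the small-prime contribution via $\#\{j \le n : p^\ell \mid j^2+1\} = 2n/p^\ell + O(1)$ together with the Mertens--Siegel--Walfisz asymptotic $\sum_{p \le n,\, p \equiv 1 \pmod 4} \log p/(p-1) = \tfrac{1}{2}\log n + C_4 + O(e^{-c\sqrt{\log n}})$ isolates
\[
T := \sum_{\substack{p > n\\ p \equiv 1 \pmod 4}} N_p(n) \log p, \qquad N_p(n) := \#\{j \in [1, n]: p \mid j^2+1\} \in \{0, 1, 2\}.
\]
The $k=1$ part of $S_{>n}$ equals $T - R$ where $R := \sum_{n < p \le 2n,\, p \equiv 1 \pmod 4} \log p \cdot \mathbf{1}[N_p(n) = 2]$ corrects for primes having both roots $\alpha_p, p - \alpha_p$ inside $[1, n]$. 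The $k \ge 2$ contributions (necessarily with $p \le \sqrt{n^2+1}$) are handled by analogous equidistribution arguments and combine into the linear constant.

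The key analytic step, and the main obstacle, is the asymptotic evaluation of $R$ with a power-of-$\log$ error. The condition $N_p(n) = 2$ for $p \in (n, 2n]$ translates to the normalized root $\alpha_p/p$ lying in $[1 - n/p,\, n/p] \pmod 1$, an interval of varying length $2n/p - 1$. Homma's theorem~\cite{kosuke} supplies a discrepancy bound $D_P \ll (\log P)^{-4/9 + \varepsilon}$ for the sequence $\{\alpha_p/p\}_{p \le P,\, p \equiv 1 \pmod 4}$; combining this with a dyadic decomposition of $(n, 2n]$ and Abel summation against $\log p$ produces $R = R_{\mathrm{main}} + O\bigl(n/(\log n)^\theta\bigr)$ for any $\theta < 4/9$. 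Collecting all pieces, the main terms assemble into $n\log n + Bn$, with $B$ matching Expression~\eqref{B:x^2+1} after $L$-series identities (via $L(s, \chi_{-4})$) reduce the intermediate Mertens constants to $\gamma - 1 - \tfrac{\log 2}{2} - \sum_{p \ne 2} \bigl(\tfrac{-1}{p}\bigr) \log p/(p-1)$. The error term is inherited directly from Homma's discrepancy bound via the Erd\H{o}s--Tur\'an inequality.
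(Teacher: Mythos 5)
The central gap is in how you isolate $T$: you propose to evaluate the contribution of \emph{all} primes $p\le n$ by the trivial count $\#\{j\le n:\ p^{\ell}\mid j^2+1\}=2n/p^{\ell}+O(1)$. Summed over the primes $p\le n$ with $p\equiv 1\ (\mathrm{mod}\ 4)$ (and over the relevant $\ell$), these $O(1)$ errors amount to $\asymp \pi(n)\log n\asymp n$, i.e.\ the same order as the constant term $Bn$ you are trying to determine, and far larger than the target $O\big(n/(\log n)^{\theta}\big)$. For $\ell=1$ the fluctuation is exactly $1-\{(n-\nu_1)/p\}-\{(n-\nu_2)/p\}$, with $\nu_1,\nu_2$ the roots of $x^2+1$ modulo $p$, and the only way to beat the trivial bound is to exploit the equidistribution of $\nu/p$ for the medium primes $n^{2/3}\le p\le n$ as well: this is precisely the content of the paper's Lemma~\ref{alphastarlemma}, where $n/p$ is frozen on about $(\log n)^{\delta}$ short ranges and Homma's discrepancy bound is applied on each range through the Koksma--Hlawka lemma, giving a fluctuation sum of size $O\big(n/(\log n)^{\delta/2}\big)$. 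In your plan equidistribution is invoked only for the correction term $R$ over $(n,2n]$, so as written the argument cannot even pin down the constant $B$, let alone deliver the stated error term.

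Two further points. First, the exponent: Homma's theorem gives discrepancy $O\big((\log N)^{-\delta}\big)$ for any $\delta<8/9$, not $(\log N)^{-4/9+\varepsilon}$; the final $4/9$ is not ``inherited directly'' via Erd\H{o}s--Tur\'an but results from balancing the number of ranges on which $n/p$ is frozen against the per-range discrepancy error, which halves the exponent ($\theta=\delta/2$). A ``dyadic decomposition of $(n,2n]$'' cannot do this job: $(n,2n]$ is a single dyadic block, and on it the interval $[1-n/p,\,n/p]$ has length varying by $\Theta(1)$, so one needs a partition into roughly $(\log n)^{\delta}$ pieces as in the paper. Second, the $k\ge 2$ prime powers with $p>n^{2/3}$ (i.e.\ $p^2\mid j^2+1$ for some $j\le n$) are not handled by ``analogous equidistribution arguments'': Homma's result concerns roots modulo $p$, not modulo $p^2$. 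The paper needs a separate spacing/counting argument (Lemma~\ref{badprimeslemma}) showing there are $\ll n^{2/3}$ such primes, whence their total contribution is $\ll n^{2/3}\log n$. These two issues are repairable by the arguments in the paper, but the first one is the actual analytic heart of the proof and is missing from your proposal.
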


The infinite sum in \eqref{B:x^2+1} appears in other mathematical
contexts: as it is pointed in~\cite{Moree} this sum is closely related
to multiplicative sets whose elements are non-hypotenuse numbers
(i.e. integers which could not be written as the hypothenuse of a
right triangle with integer sides).

\subsection*{Plan of the paper:} in Section~\ref{sec:notation} we
recall the basic necessary results and fix the notation used in the
rest of the paper.  We explain the
strategy for the proof of Theorem~\ref{theoremone} in Section~\ref{sec:strategy}, which is
based on a detailed study of medium primes (Section~\ref{sec:medium}).
Then, using these partial results, in Section~\ref{sec:proof} we
provide the complete proof of Theorem~\ref{theoremone}.

\section{Background and notation}\label{sec:notation}

Throughout the paper $p$ will denote a prime number and the
Landau symbols $O$ and $o$, as well as the
Vinogradov symbols $\ll$, $\gg$ will be employed with their usual
meaning. We will also use the following notation:
\begin{align*}
\pi(n) &= \left| \{p:  p \leq n\}\right|,\\
  \pi_1(n)     &= \left| \{p: p\equiv 1\m{4}, p \leq n\}\right|, \\
  \pi_1([a,b]) &= \left| \{p: p\equiv 1\m{4}, a < p \leq b\}\right|.
\end{align*}

The Prime Number Theorem states that the following estimate holds:
\begin{align} \label{PNT}
\psi(n)&=\log\lcm
(1,2,\dots,n)=n+E(n),\,\,E(n)=O\bigg(\frac{n}{\left(\log
n\right)^{\kappa}}\bigg),\end{align}
where $\kappa$ can be chosen as large as necessary. We also use the following
estimate, which follows from Prime Number Theorem for arithmetic
progressions:
\begin{equation}\label{eq:pnt-ap}
\pi_1(n) = \frac{n}{2\log n} + O\left( \frac{n}{(\log n)^2}
\right).
\end{equation}

The result needed in order to refine the error term
of~\eqref{cille1} is the main theorem in~\cite{kosuke},
which deals with the distribution of fractional parts ${\nu/p}$,
where $p$ is a prime less than or equal
to $n$ and $\nu$ is a root in $\mathbb{Z}/p\mathbb{Z}$ of a quadratic
polynomial $f(x)$ with negative discriminant. For this $f$, we define
the discrepancy
$D_f(n)$ associated to the set of fractions $\{\nu/p:
f(\nu)\equiv 0 \m{p},\, p \leq n\}$ as
$$D_f(n)=\sup_{[u,v] \in
[0,1]}\Bigg||[u,v]|-\frac{1}{\pi(n)}\sum_{p\leq n}\sum_{\substack{u<
\nu/p\leq v \\ f(\nu) \equiv 0 \m{p}}}1\Bigg|,$$
where $|[u,v]|:= v-u$. Under these assumptions, the main theorem
of~\cite{kosuke} can be stated as follows:

\begin{theorem}\label{theoremkosuke}
  Let $f$ be any irreducible quadratic polynomial with integer
coefficients   and negative discriminant. Then for any $\delta < 8/9$
we have
  $$D_f(n)= O\left( \frac{1}{(\log n)^\delta} \right).$$
\end{theorem}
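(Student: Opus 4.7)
The plan is to follow Cilleruelo's strategy~\cite{quadratic-cille}, with Theorem~\ref{theoremkosuke} providing the quantitative discrepancy bound that upgrades the qualitative equidistribution input used there. Starting from
$$
\log L(n) \;=\; \sum_{p} \alpha_p \log p, \qquad \alpha_p := \max_{1\le k\le n} v_p(f(k)),
$$
where $L(n)=\lcm(f(1),\ldots,f(n))$, only $p=2$ and primes $p\equiv 1\pmod 4$ contribute, since $-1$ is a non-residue for the remaining odd primes. For each $p\equiv 1\pmod 4$ and $j\ge 1$, Hensel's lemma produces two roots $\pm\nu_{p,j}$ of $x^2+1\pmod{p^j}$, and with the normalization $0<\nu_{p,j}\le p^j/2$ one has $\alpha_p\ge j$ if and only if $\nu_{p,j}\le n$ or $p^j-\nu_{p,j}\le n$.

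I would then split the sum by the size of $p^j$. The dominant term of order $n\log n$ comes from primes $p>2n$ with $\nu_p\le n$: for such $p$ the probability of this condition is roughly $2n/p$, and summing $\log p$ dyadically over the $O(\log n)$ scales $P\in[2^k n, 2^{k+1}n]$ produces $\sim n\log n$. For primes $p\in(n,2n]$ with $p\equiv 1\pmod 4$, one has $\alpha_p=1$ automatically, so this range contributes $\sim n/2$. Contributions from powers $p^j$ with $p^j\le n$ reduce to classical Mertens sums in arithmetic progressions, producing part of the constant $Bn$; remaining powers $j\ge 2$ with $p^j>n$ contribute only $O(n^{1/2})$ by a union bound, and small primes $p\le n^{1-\epsilon}$ contribute only $O(n^{1-\epsilon})$ via the crude estimate $\alpha_p\le 2\log n/\log p$ together with $\sum_{p\le x}\log p\ll x$. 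The delicate medium range $n^{1-\epsilon}<p\le n$ is the object of Section~\ref{sec:medium}.

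The central technical input is Theorem~\ref{theoremkosuke}. Each sum above counts primes whose normalized root $\nu_p/p$ lies in a prescribed (possibly $p$-dependent) subinterval of $[0,1]$, weighted by $\log p$. I would estimate them via a dyadic decomposition of the prime range into $[P,2P]$ combined with a partition of $[0,1]$ into $K$ equal pieces: on each pair, Abel summation against the weight $\log p$ converts the discrepancy bound $D_f(N)\ll(\log N)^{-\delta}$ into an error of order $O(P/(\log P)^{\delta})$. Reassembling the pieces yields the main term $n\log n+Bn$, with $B$ matching~\eqref{B:x^2+1} via the Mertens evaluations for primes $\equiv 1\pmod 4$, the $p=2$ contribution $-(\log 2)/2$, and the infinite sum $\sum_{p\ne 2}(\tfrac{-1}{p})\log p/(p-1)$. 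The main obstacle is balancing $K$ and the $O(\log n)$ dyadic scales of $P$ against the per-piece discrepancy error: the optimal trade-off halves the exponent, turning the $\delta<8/9$ of Theorem~\ref{theoremkosuke} into the final $\theta<4/9$ in the cumulative error $O(n/(\log n)^{\theta})$.
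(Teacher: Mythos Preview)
Your proposal does not address the stated theorem. Theorem~\ref{theoremkosuke} is the discrepancy bound $D_f(n)=O((\log n)^{-\delta})$ for the fractional parts $\nu/p$ of roots of $f$ modulo primes; the paper does not prove it but quotes it from Homma~\cite{kosuke} as an external input. What you have written is instead a sketch of Theorem~\ref{theoremone} (the asymptotic for $\log L_n$), and you explicitly \emph{invoke} Theorem~\ref{theoremkosuke} as ``the central technical input.'' So nothing in your write-up establishes the discrepancy estimate itself; the argument is circular with respect to the statement you were asked to prove. A genuine proof of Theorem~\ref{theoremkosuke} requires the spectral machinery for Weyl sums over roots of quadratic congruences developed by Duke--Friedlander--Iwaniec and refined by Homma, none of which appears here.

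As an aside, even viewed as a sketch of Theorem~\ref{theoremone} your outline departs from the paper's route. The paper does not analyze $\log L_n=\sum_p\beta_p\log p$ directly or extract the $n\log n$ term from ``primes $p>2n$ with $\nu_p\le n$''; instead it writes $\log L_n=\log P_n+\sum_p(\beta_p-\alpha_p)\log p$, gets $2n\log n-2n$ from Stirling, and shows (Lemma~\ref{lemma:basic}) that primes $p>2n$ contribute nothing to the difference. Equidistribution is then applied only in the medium range $n^{2/3}\le p\le 2n$ via the quantity $\alpha_p^*$. Your symbol $\alpha_p$ also matches the paper's $\beta_p$, not its $\alpha_p$.
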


As a consequence of this result, we have the following lemma:

\begin{lemma}
  Let $g:[0,1] \to \R$ be any function of bounded variation, and $n<N$
two positive real numbers. Then for any $\delta < 8/9$
  $$\sum_{\substack{n < p < N \\ 0\leq \nu < p\\ \nu^2 \equiv -1
\m{p}}}
  g\left(\frac{\nu}{p}\right) = 2\pi_1([n,N])\int_{0}^{1}g(t) \ud t +
  O\left( \frac{N}{(\log N)^{1 + \delta}}\right).$$
  \label{integrationlemma}
\end{lemma}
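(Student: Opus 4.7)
The plan is to apply a Koksma-type inequality to transfer the discrepancy estimate of Theorem~\ref{theoremkosuke} into a statement about the sums $\sum g(\nu/p)$, and then to recover the $(n,N]$ range by subtracting the corresponding estimates for $p\le N$ and $p\le n$.

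First I would treat the sum over all primes $p\le N$. Since $f(x)=x^2+1$ has exactly two roots modulo $p$ when $p\equiv 1\m{4}$ and none when $p\equiv 3\m{4}$ (with $p=2$ contributing $O(1)$), the number of points $\nu/p$ in the collection equals $M_N:=2\pi_1(N)+O(1)$. Siegel--Walfisz applied to $\pi(N)-2\pi_1(N)=\pi_3(N)-\pi_1(N)+O(1)$ bounds this difference by $O(N/(\log N)^A)$ for any fixed $A$, so in particular $\pi(N)/M_N=1+O((\log N)^{-A})$.

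Next, setting $F(t):=\#\{(p,\nu):p\le N,\ \nu^2\equiv-1\m{p},\ \nu/p\le t\}$ and specializing the definition of $D_f$ to intervals $[0,t]$ gives $|F(t)-\pi(N)\,t|\le \pi(N)\,D_f(N)$. Combined with the comparison between $\pi(N)$ and $M_N$, the star-discrepancy of the point set $\{\nu/p\}$ (normalized by $M_N$) satisfies $D^*_{M_N}=O((\log N)^{-\delta})$ thanks to Theorem~\ref{theoremkosuke}. The classical Koksma inequality for functions of bounded variation then yields
$$\sum_{\substack{p\le N\\ \nu^2\equiv-1\m{p}}} g(\nu/p)=M_N\int_0^1 g(t)\ud t+O\bigl(M_N\,V(g)\,D^*_{M_N}\bigr)=2\pi_1(N)\int_0^1 g(t)\ud t+O\biggl(\frac{N}{(\log N)^{1+\delta}}\biggr),$$
using $M_N\ll N/\log N$ and absorbing the Siegel--Walfisz correction into the error.

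The analogous estimate with $N$ replaced by $n$ carries error $O(n/(\log n)^{1+\delta})$, which is dominated by $O(N/(\log N)^{1+\delta})$ once $n$ is moderately large (and trivially for small $n$, where the sum has $O(n)$ bounded terms). Subtracting the two estimates gives the lemma, with $2\pi_1([n,N])=2(\pi_1(N)-\pi_1(n))$ as the main term. The main technical subtlety is matching the normalization of $D_f$ used in the paper (by $\pi(N)$) with the probability-measure form required by Koksma (normalized by $M_N$); once this reduction is carried out via Siegel--Walfisz, the rest is routine. An alternative route avoiding an explicit appeal to Koksma is Riemann--Stieltjes integration by parts in the decomposition $F(t)=\pi(N)\,t+E(t)$ with $\|E\|_\infty\le \pi(N)\,D_f(N)$, which produces the same bound after estimating $|\int_0^1 E(t)\ud g(t)|\le \|E\|_\infty V(g)$.
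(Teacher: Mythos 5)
Your proposal is correct and follows essentially the same route as the paper: apply the Koksma(--Hlawka) inequality for functions of bounded variation together with the discrepancy bound of Theorem~\ref{theoremkosuke}, then obtain the range $(n,N]$ by subtracting the estimates for $p\le n$ and $p\le N$, using $\pi_1(N)\ll N/\log N$ to convert the discrepancy error into $O(N/(\log N)^{1+\delta})$. The only difference is that you make explicit the renormalization between the paper's discrepancy (divided by $\pi(N)$) and the actual number of points $2\pi_1(N)+O(1)$, a step the paper's proof passes over silently; this is a welcome clarification, and the prime number theorem for arithmetic progressions (as in~\eqref{eq:pnt-ap}) already suffices for it, without invoking Siegel--Walfisz.
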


\begin{proof} We know by the Koksma--Hlawka identity (see Theorem 2.11
in~\cite{niederreiter}) that for
  any sequence $A = \left\{ a_1, a_2, \dots, a_n \right\}$, $A \subset
[0,1]$, with discrepancy $D(n)$ and for any $g:[0,1] \to \R$ with
bounded variation, we have
  $$\frac{1}{n}\sum_{i=1}^{n}g(a_i) = \int_{0}^{1}g(t) \ud t +
O(D(n)),$$
  so
  \begin{align*}
    \sum_{i=n}^{N}g(a_i)
     = \sum_{i=1}^{N}g(a_i) - \sum_{i=1}^{n}g(a_i)
     = (N-n)\int_{0}^{1}g(t) \ud t + O(N D(N))+O(nD(n)).
  \end{align*}
In our case, using Theorem~\ref{theoremkosuke}, we get
  $$\sum_{\substack{n < p < N \\ 0\leq \nu < p\\ \nu^2 \equiv -1
\m{p}}}
  g\left(\frac{\nu}{p}\right) = 2\pi_1([n,N])\int_{0}^{1}g(t) \ud t +
  O\left(\frac{\pi_1(N)}{(\log N)^\delta}\right).$$
  Using rough estimate $\pi_1(N) = O\left(\frac{N}{\log N}\right)$ we
get the required
  error term.
\end{proof}

\section{The strategy}\label{sec:strategy}
The content of this section can be found in~\cite{quadratic-cille}. We include it here for completeness and to prepare the reader for the forthcoming arguments.

Denote by $P_n = \prod_{i=1}^{n}(i^2 + 1)$
and $L_n = \lcm (1^2 +1, 2^2 + 1,
\dots, n^2 + 1)$, and write $\alpha_p(n) = \ord_p(P_n)$ and
$\beta_p(n) = \ord_p(L_n)$. The argument for estimating $L_n$ stems
from the following equality:
$$\log L_n = \log P_n + \sum_{p} (\beta_p(n) - \alpha_p(n))\log
p.$$
Clearly it is not difficult to estimate $\log P_n$. Indeed, using
Stirling's approximation formula, we get
\begin{align*}
  \log \prod_{i=i}^{n}(i^2+1)
   = \log \prod_{i=1}^{n}i^2 + \log
\prod_{i=1}^{n}\left(1+\frac{1}{i^2}\right)
   = 2 \log n! + O(1) = 2n\log n - 2n + O(\log n),
\end{align*}
and so in the remainder of the paper we will be concerned with the
estimation of $\sum_{p} (\beta_p(n) - \alpha_p(n))\log p$. We start here by
making three simple observations:

\begin{lemma}\label{lemma:basic}
\mbox{}
  \begin{enumerate}[\rm i)]
    \item $\beta_2(n) - \alpha_2(n) = -n/2 + O(1)$,
    \item $\beta_p(n) - \alpha_p(n) = 0$, when $p > 2n$.
    \item $\beta_p(n) = \alpha_p(n) = 0$, when $p \equiv 3 \m{4}$.
  \end{enumerate}
\end{lemma}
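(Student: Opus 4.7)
The three statements all follow by directly unpacking the definitions of $\alpha_p(n)=\ord_p(P_n)=\sum_{i\le n}\ord_p(i^2+1)$ and $\beta_p(n)=\ord_p(L_n)=\max_{i\le n}\ord_p(i^2+1)$, together with elementary facts about the roots of $x^2+1$ modulo a prime.

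For (iii), I would simply invoke the quadratic reciprocity criterion: when $p\equiv 3\m4$, $-1$ is a non-residue modulo $p$, so the congruence $i^2\equiv -1\m{p}$ has no solution. Hence $\ord_p(i^2+1)=0$ for every $i$, giving $\alpha_p(n)=\beta_p(n)=0$.

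For (ii), the key observation is that when $p>2n$ the two roots $\nu,\,p-\nu$ of $x^2+1\equiv 0\m{p}$ in $\{0,1,\dots,p-1\}$ lie on opposite sides of $p/2>n$, so at most one value of $i\in\{1,\dots,n\}$ is a root mod $p$. Moreover, for that $i$ (if it exists) we have $i^2+1\le n^2+1<p^2$ (since $p>2n$ forces $p^2>4n^2$), so $\ord_p(i^2+1)\le 1$. Combining these, the sum $\alpha_p(n)$ consists of at most one term equal to the maximum $\beta_p(n)$, hence $\alpha_p(n)=\beta_p(n)$.

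For (i), I would compute both quantities using the elementary fact that $i^2+1$ is odd for even $i$, while for odd $i$ one has $i^2\equiv 1\m 8$ and therefore $\ord_2(i^2+1)=1$. Summing over $i\le n$ yields $\alpha_2(n)=\#\{i\le n:i\text{ odd}\}=n/2+O(1)$, while $\beta_2(n)=\max_{i\le n}\ord_2(i^2+1)=1$ (attained already at $i=1$). Subtracting gives $\beta_2(n)-\alpha_2(n)=-n/2+O(1)$. The only mild care needed is the $2$-adic computation for odd $i$; everything else is a one-line consequence of the definitions.
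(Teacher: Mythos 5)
Your proposal is correct and matches the paper's proof in all essentials: part (iii) is the same non-residue observation, and part (i) is the same $2$-adic count ($\ord_2(i^2+1)=1$ for odd $i$ and $0$ for even $i$, so $\alpha_2(n)=n/2+O(1)$ while $\beta_2(n)=1$). For part (ii) you argue via the two roots $\nu$, $p-\nu$ of $x^2+1$ modulo $p$ lying on opposite sides of $p/2>n$, whereas the paper notes that two indices $i<j\le n$ with $p\mid i^2+1$ and $p\mid j^2+1$ would force $p\mid(i-j)(i+j)$ and hence $p\le 2n$; these are equivalent one-line arguments, and your extra step that $\ord_p(i^2+1)\le 1$ is not even needed, since a sum with at most one nonzero term already equals its maximum.
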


\begin{proof}\mbox{}
  \begin{enumerate}[i)]
    \item $i^2 + 1$ is never divisible by $4$ and is divisible by $2$
for
      every odd $i$.
    \item Note that $\alpha_p(n) \neq \beta_p(n)$ only if there exist
$i<j
      \leq n$ such that $p|i^2+1$ and $p|j^2+1$. But this implies
      $p|(i-j)(i+j)$, and in turn $p\leq 2n$.
    \item $i^2 + 1$ is never divisible by $p \equiv 3 \m{4}$ as
      $-1$ is not a quadratic residue modulo such prime.
  \end{enumerate}
\end{proof}

Since we have dealt with the prime $2$, from now on we will only consider
odd primes. Lemma~\ref{lemma:basic} also states that it is sufficient to
study the order of prime numbers which are smaller than $2n$ and are
equivalent to $1$ modulo $4$. We split these primes in two groups: ones
that are smaller than $n^{2/3}$ and others that are between $n^{2/3}$ and
$2n$, \emph{small} and \emph{medium} primes respectively.

The computation for small primes is easy and it is done in the lemma
below, after obtaining simple estimates for $\alpha_p(n)$ and $\beta_p(n)$.
Analysis of medium primes, which is left for the next section, is more
subtle and will lead to improvement of the error term.

\begin{lemma} For primes $p \equiv 1 \m{4}$ the following estimates
hold:
  \begin{enumerate}[\rm i)]
    \item $\beta_p(n) \ll \frac{\log n}{\log p}$,
    \item $\alpha_p(n) = \frac{2n}{p-1} + O\left( \frac{\log n}{\log
p} \right)$.
  \end{enumerate}
  \label{alphabetaestimates}
\end{lemma}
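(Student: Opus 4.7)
The plan is to rewrite the exponents as counts over the divisibility hierarchy: writing
$$\alpha_p(n) = \sum_{i=1}^n \ord_p(i^2+1) = \sum_{k \geq 1} N_k(n), \qquad N_k(n):=\abs{\set{i \leq n:\ p^k \mid i^2+1}},$$
I observe that $\beta_p(n)$ is simply the largest $k$ for which $N_k(n)\geq 1$, so everything reduces to estimating the counting function $N_k(n)$.

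For part (i), I would argue directly: if $k=\beta_p(n)$, then some $i\leq n$ satisfies $p^k\mid i^2+1$, hence $p^k\leq i^2+1\leq n^2+1$. Taking logarithms gives $\beta_p(n)\log p \leq 2\log n + O(1)$, which is precisely the claim. This part is essentially immediate.

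For part (ii), the first step is to count the number of solutions of $x^2\equiv -1\pmod{p^k}$. Since $p\equiv 1\pmod 4$, the integer $-1$ is a quadratic residue modulo $p$, and since $p$ is odd the derivative $2x$ of $x^2+1$ does not vanish modulo $p$ at such roots. Hensel's lemma then guarantees exactly two solutions modulo $p^k$ for every $k\geq 1$. Consequently $N_k(n)=2n/p^k + O(1)$ whenever $p^k\leq n$ (each of the two residue classes contributes either $\lfloor n/p^k\rfloor$ or $\lfloor n/p^k\rfloor + 1$ values), with an absolute implied constant, while for $p^k>n$ one trivially has $N_k(n)\leq 2$.

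Setting $K_0 := \lfloor \log n / \log p \rfloor$ and splitting the sum at $k=K_0$, I would then use the geometric estimate
$$\sum_{k=1}^{K_0}\frac{2n}{p^k} = \frac{2n}{p-1} - \frac{2n}{(p-1)p^{K_0}} = \frac{2n}{p-1}+O(1),$$
and observe that the accumulated $O(1)$ errors up to $k=K_0$ contribute $O(\log n/\log p)$, while the tail $K_0<k\leq \beta_p(n)$ contributes at most $2(\beta_p(n)-K_0) = O(\log n/\log p)$ by part (i). Assembling these pieces yields (ii). The only point that requires care is that all implied constants in the expression for $N_k(n)$ remain absolute in $k$ and $p$; this is transparent from the uniform two-solution count, so no real obstacle is expected.
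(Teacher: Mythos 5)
Your proof is correct and follows essentially the same route as the paper: both decompose $\alpha_p(n)$ into counts of $i\leq n$ with $p^k\mid i^2+1$, use that $x^2\equiv -1\m{p^k}$ has exactly two solutions (which you justify via Hensel, while the paper asserts it), and sum the resulting geometric series with $O(1)$ errors per level to get the $O(\log n/\log p)$ term. Your splitting at $p^k\leq n$ with a trivial bound on the tail, versus the paper's cutoff at $p^a\leq n^2+1$, is only a minor technical variation.
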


\begin{proof}\mbox{}
  \begin{enumerate}[i)]
    \item  It is clear that $\beta_{p}(n)$ satisfies $p^{\beta_{p}(n)}
      \leq n^2 +1$, so $$\beta_{p}(n) \leq \frac{\log (n^2 + 1)}{\log
p} \ll
      \frac{\log n}{\log p}.$$

    \item In order to estimate $\alpha_{p}(n)$ note that for primes $p
      \equiv 1 \m{4}$ equation $i^2 \equiv -1 \m{p^a}$ has two
solutions
      $\nu_1$ and $\nu_2$ in the interval $[1,p^a]$ and every other
      solution is of the form $\nu_1 + kp^a$ or $\nu_2 + kp^a$,
$k\in
      \Z$. The number of times $p^a$ divides $i^2+1$, $i=1,\dots,n$ is given by
      \begin{equation}
	2+\left\lfloor\frac{n-\nu_1}{p^a} \right\rfloor +
\left\lfloor\frac{n-\nu_2}{p^a}\right\rfloor,
	\label{alphaestimate}
      \end{equation}
      which equals to $0$ for $p^a > n^2 + 1$ and $2n/p^a +O(1)$
for $p^a \leq n^2 +1$. Therefore we get
      \begin{align*}
	\alpha_{p}(n)
	& = 2\sum_{j=1}^{\big\lfloor \frac{\log (n^2 + 1)}{\log
p}\big\rfloor }
	\frac{n}{p^j} + O\left(\frac{\log n}{\log p}\right)\\
	& = 2n\sum_{j=1}^{\infty} \frac{1}{p^j} - 2n \sum_{j =
	\big\lfloor\frac{\log (n^2 + 1)}{\log p}\big\rfloor +
1}^{\infty}  \frac{1}{p^j} +
	O\left(\frac{\log n}{\log p}\right)\\
	& = \frac{2n}{p-1} + O\left(\frac{\log n}{\log p}\right),
      \end{align*}
  \end{enumerate}
and the claim follows.
\end{proof}

\begin{lemma} The following estimate holds:
  $$\sum_{2<p< n^{2/3}} (\alpha_p(n) - \beta_p(n))\log p = \sum_{2<p<
n^{2/3}}
  \frac{\left(1+\big(\frac{-1}{p}\big)\right)n\log p}{p-1} +
O(n^{2/3}).$$
  \label{smallprimeslemma}
\end{lemma}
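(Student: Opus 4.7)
The plan is to apply Lemma~\ref{alphabetaestimates} termwise and then bound the resulting accumulated error by Chebyshev/PNT estimates.

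First I would split the sum on the left-hand side according to the residue class of $p$ modulo $4$. By part iii) of Lemma~\ref{lemma:basic}, every prime $p \equiv 3 \m{4}$ contributes $0$ to both sides, since $\alpha_p(n) = \beta_p(n) = 0$ and the factor $1 + \big(\frac{-1}{p}\big)$ vanishes. Thus both sums reduce to sums over primes $p \equiv 1 \m{4}$ with $2 < p < n^{2/3}$, and on the right-hand side the factor $1 + \big(\frac{-1}{p}\big)$ equals $2$.

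Next, for each such prime $p$, I would combine the two parts of Lemma~\ref{alphabetaestimates} to obtain
$$\alpha_p(n) - \beta_p(n) = \frac{2n}{p-1} + O\!\left(\frac{\log n}{\log p}\right),$$
so that, after multiplying by $\log p$,
$$(\alpha_p(n) - \beta_p(n))\log p = \frac{2n\log p}{p-1} + O(\log n).$$
Summing this identity over the primes $2 < p < n^{2/3}$ with $p \equiv 1 \m{4}$ yields the desired main term, and the total error is
$$O(\log n) \cdot \#\{p \leq n^{2/3}\} \ll \log n \cdot \frac{n^{2/3}}{\log n} = O(n^{2/3}),$$
where I used the trivial bound from the Prime Number Theorem.

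There is essentially no obstacle here: everything is already prepared by Lemma~\ref{alphabetaestimates} and Lemma~\ref{lemma:basic}, and the argument is pure bookkeeping. The only thing one needs to double-check is that the per-prime $O(\log n)$ error is uniform in $p$ (it is, because the implied constants in Lemma~\ref{alphabetaestimates} do not depend on $p$), so that multiplying by the crude prime count $\pi(n^{2/3}) \ll n^{2/3}/\log n$ gives the claimed $O(n^{2/3})$ bound.
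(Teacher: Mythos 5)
Your proposal is correct and follows essentially the same route as the paper: apply Lemma~\ref{alphabetaestimates} to the primes $p\equiv 1\m{4}$, use Lemma~\ref{lemma:basic} iii) to discard $p\equiv 3\m{4}$, and absorb the accumulated $O(\log n)$ per-prime errors via the Chebyshev bound $\pi(n^{2/3})\ll n^{2/3}/\log n$. The only cosmetic difference is that the paper bounds the $\beta_p$-sum and the $\alpha_p$-sum separately rather than combining them into $\alpha_p-\beta_p$ first.
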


\begin{proof}
  Using the estimates from Lemma~\ref{alphabetaestimates} we get
  $$\sum_{2<p< n^{2/3}} \beta_{p}(n) \log p \ll \sum_{2<p < n^{2/3}}
\log n \ll n^{2/3},$$
  and also
  \begin{align*}
    \sum_{2<p< n^{2/3}} \alpha_{p}(n)\log p\,
    & = \sum_{\substack{p< n^{2/3} \\ p \equiv 1 \m{4}}}
\left(\frac{2n\log
    p}{p-1}
    + O(\log n)\right)\\
    & = \sum_{2<p< n^{2/3}}
\frac{\left(1+\big(\frac{-1}{p}\big)\right)n\log p}{p-1} + O(n^{2/3}),
  \end{align*}
  and hence the claim follows.
\end{proof}

\section{Medium primes}\label{sec:medium}

In order to deal with the remaining primes, we note, that if prime
$p\equiv
1 \m{4}$ is in the range $n^{2/3} \leq p \leq 2n$ then it divides $i^2
+ 1$
for some $i\leq n$. However, since such a prime is sufficiently large
compared to $n^2 + 1$, the case that $p^2$ divides some $i^2 +1$,
$i\leq n$
is unlikely.

Having this in mind, we separate contribution of higher
degrees from the contribution of degree~$1$. Define for $p \equiv 1
\m{4}$:
\begin{align*}
  \alpha^*_{p}(n) & = \left|\{i: \, p|i^2+1, \, i\leq n \}\right|, \\
  \beta^*_{p}(n) & = 1,
\end{align*}
and, for $p\equiv 3\m{4}$, $\alpha^{*}_{p}(n) = \beta^{*}_{p}(n) = 0$.
Then
\begin{align}\label{3-sum}
  \sum_{n^{2/3} \leq p \leq 2n} (\beta_{p}(n) - \alpha_{p}(n))\log p
   = &\sum_{n^{2/3} \leq p \leq 2n} (\beta_{p}(n) - \beta^{*}_{p}(n)
   - \alpha_{p}(n) + \alpha^{*}_{p}(n))\log p\,+ \\
  & \sum_{n^{2/3} \leq p \leq 2n} \beta^{*}_{p}(n) \log p -
\sum_{n^{2/3} \leq p \leq 2n} \alpha^{*}_{p}(n) \log p.\nonumber
\end{align}

We now estimate each sum in the previous equation. We start estimating
the first one:

\begin{lemma}The following estimate holds:
 $$\sum_{n^{2/3} \leq p \leq 2n} (\beta_{p}(n) - \beta^{*}_{p}(n)
  - \alpha_{p}(n) + \alpha^{*}_{p}(n))\log p \ll n^{2/3}\log n.$$
  \label{highdegreeslemma}
\end{lemma}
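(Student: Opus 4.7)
The plan is to bound the absolute value of the summand by the ``higher-power'' excess $(\alpha_p-\alpha_p^*)\log p$, and then reduce the problem to counting integers $i\leq n$ for which $i^2+1$ has a large squared prime factor. For $p\equiv 1\m{4}$ with $n^{2/3}\leq p\leq 2n$ one has $\alpha_p^*(n)\geq 1=\beta_p^*(n)$, since $p\leq 2n$ forces at least one root of $\nu^2\equiv -1\m{p}$ into $[1,n]$; moreover, picking any index $i^{*}\leq n$ attaining $v_p((i^{*})^2+1)=\beta_p(n)$ gives
$$\alpha_p(n)-\alpha_p^*(n)\geq v_p((i^{*})^2+1)-1=\beta_p(n)-\beta_p^*(n).$$
Hence the summand is non-positive and its absolute value is dominated by $(\alpha_p-\alpha_p^*)\log p$.

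Next I swap the order of summation. For $p\geq n^{2/3}$ with $p^2\mid i^2+1\leq n^2+1$ one has $v_p(i^2+1)\leq 3$, and at most one such prime can occur for each $i$ (two would force $n^{8/3}\leq p_1^2p_2^2\leq n^2+1$). Thus each $i$ contributes at most $2\log(2n)$, and only when $i$ belongs to
$$S:=\{\,i\leq n: \exists\, p\in[n^{2/3},2n]\text{ prime with }p^2\mid i^2+1\,\}.$$
The lemma therefore reduces to showing $|S|\ll n^{2/3}$.

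To establish this, associate to each $i\in S$ the quotient $m:=(i^2+1)/p^2\leq n^{2/3}$: the pair $(i,p)$ is then a positive integer solution of the negative Pell equation $x^2-my^2=-1$, and the map $i\mapsto (m,p)$ is injective, so it suffices to sum the number of such Pell solutions with $x\leq n$ over $m\in[2,n^{2/3}]$. For each such $m$ the solutions form a geometric sequence governed by the fundamental solution $\varepsilon_m$, and the elementary bound $\varepsilon_m\gg\sqrt{m}$ yields at most $O(\log n/\log m + 1)$ solutions with $x\leq n$. Combining with $\sum_{m\leq M}1/\log m\sim M/\log M$ produces $|S|\ll n^{2/3}$, and hence the whole sum is bounded by $\log(2n)\,|S|\ll n^{2/3}\log n$.

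The main obstacle is precisely the bound $|S|\ll n^{2/3}$: a naive counting of pairs $(i,p)$ with $p^2\mid i^2+1$ only gives $|S|\ll n^{2/3}\log n$ and would produce a spurious logarithmic factor in the lemma. The Pell analysis is essential because the inequality $\varepsilon_m\gg\sqrt{m}$ for the fundamental solution allows the harmonic-type sum $\sum_m \log n/\log m$ over $m\leq n^{2/3}$ to exactly balance, saving the extra logarithm.
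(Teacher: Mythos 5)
Your proof is correct, but it reaches the crucial counting bound by a genuinely different route than the paper. Even the reduction differs in flavour: you use the exact identity $\alpha_p(n)-\alpha_p^*(n)=\sum_{i\le n,\ p\mid i^2+1}\bigl(v_p(i^2+1)-1\bigr)$ together with the monotonicity $\alpha_p-\alpha_p^*\ge \beta_p-\beta_p^*\ge 0$ to bound the summand combinatorially and then swap the order of summation over $i$ and $p$, whereas the paper simply inserts the asymptotic estimates $\beta_p\ll \log n/\log p$, $\alpha_p=2n/(p-1)+O(\log n/\log p)$, $\alpha_p^*=2n/p+O(1)$ to see that each ``bad'' prime contributes $O(\log n)$ to the sum. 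Either way the lemma reduces to showing that the bad objects number $O(n^{2/3})$ --- for you the indices $i\le n$ with $p^2\mid i^2+1$ for some $p\in[n^{2/3},2n]$, for the paper the bad primes themselves; the two counts agree up to bounded factors --- and here the arguments genuinely diverge. The paper's Lemma 4.2 works in dyadic blocks $[Q,2Q]$: if $i^2+1=kp^2$ then $i/p$ lies within $1/Q^2$ of $\sqrt{k}$, while distinct fractions $i/p$, $i'/p'$ are $\gg 1/Q^2$ apart, so each $k\le (n^2+1)/Q^2$ contributes $O(1)$ primes per block, giving $\ll n^2/Q^2$ per block and $\ll n^{2/3}$ after summing the geometric series. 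You instead invoke the unit-group structure of $x^2-my^2=-1$: positive solutions are odd powers of the fundamental solution $\varepsilon_m\ge\sqrt{m}$, hence there are $O(\log n/\log m+1)$ of them with $x\le n$, and summing over $m\le n^{2/3}$ the factor $1/\log m$ absorbs the $\log n$, exactly as you say. Both mechanisms kill the spurious logarithm --- the paper by an $O(1)$ count per $k$ and dyadic block, you by the $\log n/\log m$ balancing --- and both are sound; the paper's version is more self-contained (a pure spacing argument, no Pell structure theorem needed), while yours gives cleaner per-$m$ information and avoids the dyadic decomposition. If you write yours up, make explicit the small points you are already implicitly using: $m=(i^2+1)/p^2$ is automatically a non-square integer $\ge 2$ (so the structure theorem applies), $m\le n^{2/3}+1$, and for $p\equiv 1\ (\mathrm{mod}\ 4)$, $p\le 2n$ one indeed has $\alpha_p^*\ge 1$, hence $\beta_p\ge\beta_p^*=1$.
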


To prove this lemma we need some preliminary results. As it was
intended, $(\beta_{p}(n) - \beta^{*}_{p}(n) -
\alpha_{p}(n) + \alpha^{*}_{p}(n))\log p$ is nonzero only when
$p^2|i^2 +
1$ for some $i \leq n$. We claim, that number of such primes is small:

\begin{lemma}The following estimate holds:
  $$\left| \{p: \, p^2|i^2 +1, \, n^{2/3} \leq p \leq 2n, i \leq n\}\right|
\ll
  n^{2/3}.$$
  \label{badprimeslemma}
\end{lemma}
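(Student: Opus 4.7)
The plan is to reduce the count to counting integer points on a family of Pell-type equations indexed by the cofactor $m = (i^2+1)/p^2$, and then exploit the exponential growth of the Pell solutions.

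For each prime $p$ in the set to be bounded, choose some $i_p \leq n$ with $p^2 \mid i_p^2+1$ and set $m_p = (i_p^2+1)/p^2$. Since $p \geq n^{2/3}$ forces $p^2 \geq n^{4/3}$, the cofactor satisfies $m_p \leq (n^2+1)/n^{4/3} \ll n^{2/3}$. The pair $(i_p, p)$ is a positive integer solution of $x^2 - m y^2 = -1$ with $m = m_p$. Distinct primes $p$ trivially give distinct pairs $(i_p, p)$, so the quantity to be bounded is at most
\[
  \sum_{1 \leq m \ll n^{2/3}} N(m, n),
\]
where $N(m, n)$ denotes the number of positive integer solutions $(x, y)$ of $x^2 - m y^2 = -1$ with $x \leq n$.

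Next I would estimate $N(m, n)$ using elementary Pell theory. If $m$ is a perfect square (including $m=1$), the factorisation $(x - \sqrt m\, y)(x + \sqrt m\, y) = -1$ has no positive integer solutions, so $N(m, n) = 0$. Otherwise the negative Pell equation is either unsolvable, or its positive solutions satisfy $x_k + y_k\sqrt m = (x_0 + y_0\sqrt m)^{2k+1}$, $k \geq 0$, for the fundamental solution $(x_0, y_0)$. The trivial bounds $x_0, y_0 \geq 1$ give $\varepsilon_m := x_0 + y_0\sqrt m \geq 1 + \sqrt m$, whence $x_k \gg \varepsilon_m^{2k+1} \gg m^{k+1/2}$. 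In particular $x_k \leq n$ forces $k \ll \log n/\log m$, so $N(m, n) \ll \log n/\log m$ uniformly for $m \geq 2$.

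Finally, applying the standard estimate $\sum_{m \leq M}1/\log m \ll M/\log M$ with $M \asymp n^{2/3}$,
\[
  \sum_{2 \leq m \ll n^{2/3}} \frac{\log n}{\log m} \;\ll\; \log n \cdot \frac{n^{2/3}}{\log n^{2/3}} \;\ll\; n^{2/3},
\]
as claimed. The only step that requires a little care is verifying the growth $\varepsilon_m^{2k+1} \gg m^{k+1/2}$ uniformly in $m$, which follows from $\varepsilon_m \geq 1 + \sqrt m$ (in turn from $x_0^2 = m y_0^2 - 1$ with $x_0, y_0 \geq 1$); the rest is standard bookkeeping.
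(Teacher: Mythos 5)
Your proof is correct, but it runs along a genuinely different line from the paper's. Both arguments begin the same way, by fixing the cofactor $m=(i^2+1)/p^2\ll n^{2/3}$, but the counting mechanisms then diverge. The paper splits the primes into dyadic blocks $[Q,2Q]$ and observes that for fixed $m$ every fraction $i/p$ with $p\in[Q,2Q]$ lies within $1/Q^2$ of $\sqrt m$, while distinct such fractions are $\gg 1/Q^2$ apart; hence each cofactor contributes $O(1)$ primes per block, giving $\ll n^2/Q^2$ per block and $\ll n^{2/3}$ after dyadic summation. You instead invoke the structure theorem for the negative Pell equation $x^2-my^2=-1$: the solutions are the odd powers of the fundamental unit $\varepsilon_m\geq 1+\sqrt m$, so they grow like $m^{k+1/2}$ and there are $\ll \log n/\log m$ of them with $x\leq n$; summing over $m\ll n^{2/3}$ and using $\sum_{m\leq M}1/\log m\ll M/\log M$ absorbs the logarithm. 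Your key estimates check out (the exclusion of square $m$, the lower bound $\varepsilon_m\geq 1+\sqrt m$, and the uniformity of $N(m,n)\ll\log n/\log m$ in the range $2\leq m\ll n^{2/3}$ are all fine), and your injection of primes into solutions is valid. What each route buys: the paper's spacing argument is more elementary and self-contained (it needs nothing beyond the separation of fractions with denominators in a dyadic range, plus the primality of $p$ to guarantee distinctness of the fractions $i/p$), whereas your argument needs the classification of solutions of the negative Pell equation but in exchange avoids the dyadic decomposition, never uses that $p$ is prime, and yields the sharper per-cofactor count $N(m,n)\ll\log n/\log m$ valid over the whole range of $x$ at once.
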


\begin{proof}
 Let us split the interval $[n^{2/3},2n]$ into dyadic intervals,
consider one
 of them, say $[Q,2Q]$, and define
 $$P_k=\{\,p:\, i^2+1=kp^2\ \text{ for some }i\leq n\}.$$
 We estimate the size of the set $P_k\cap [Q,2Q]$, which is nonempty
only
 when $k\leq (n^2+1)/Q^2$. For every $p\in P_k\cap [Q,2Q]$ we have
 $i^2-kp^2=(i+\sqrt{k}p)(i-\sqrt{k}p)=-1$, thus
 $$\Abs{\frac{i}{p}-\sqrt{k}} = \frac{1}{p^2}\left( \frac{i}{p} +
\sqrt{k}
 \right)^{-1} \leq \frac{1}{p^2}\leq\frac{1}{Q^2}.$$
 On the other hand, all fractions $i/p$, $p\in P_k$, are pairwise
 different, since $ip'=i'p$ implies $p=p'$ (otherwise $p|i$, and so
 $p|i^2-kp^2=-1$), therefore
 $$\Abs{\frac{i}{p}-\frac{i'}{p'}}\geq
\frac{1}{pp'}\gg\frac{1}{Q^2}.$$
 Combining both inequalities we get $\Abs{P_k\cap[Q,2Q]}\ll 1$ for every
 $k\leq (n^2+1)/Q^2$. Recalling that $P_k\cap[Q,2Q]$ is empty for
 other values of $k$ we have
 $$\left| \{p: \, p^2|i^2 +1, \, Q \leq p \leq 2Q, \, i \leq n\}\right| =
\Abs{\cup_k(P_k \cap
 [Q,2Q])}\ll
  \frac{n^2}{Q^2}.$$
 Summing over all dyadic intervals the result follows.
\end{proof}

Now we use this estimate to prove Lemma~\ref{highdegreeslemma}.
\begin{proof}[Proof of Lemma~\ref{highdegreeslemma}]
  We use estimates from Lemma~\ref{alphabetaestimates} and the
estimate for $\alpha_p^*(n)$, which follows from Expression~\eqref{alphaestimate}:
  \begin{align*}
    \beta_p(n)   &\ll \frac{\log n}{\log p},\\
    \alpha_p(n)  &= \frac{2n}{p-1}+O\left(\frac{\log n}{\log
p}\right),\\
    \alpha_p^*(n)&= \frac{2n}{p}+O(1).
  \end{align*}
  For any prime $n^{2/3}<p<2n$, such that $p^2| i^2+1$ for some $i\leq
n$,
  we get
  $$\Abs{\beta_{p}(n) - \beta^{*}_{p}(n) - \alpha_{p}(n) +
  \alpha^{*}_{p}(n)}=\frac{2n}{p(p-1)}+ O\left(\frac{\log n}{\log
p}\right)\ll \frac{\log n}{\log p}.$$
  It follows from lemma~\ref{badprimeslemma} that the number of such
primes is
  $\ll n^{2/3}$, thus
  $$\sum_{n^{2/3} \leq p \leq 2n} (\beta_{p}(n) - \beta^{*}_{p}(n) -
  \alpha_{p}(n) + \alpha^{*}_{p}(n))\log p\ll n^{2/3} \log n.$$
\end{proof}

We continue estimating the second sum in Equation~\eqref{3-sum}:

\begin{lemma} The following estimate holds:
  $$\sum_{n^{2/3} \leq p \leq 2n} \beta^{*}_{p}(n) \log p = n +
O\left(\frac{n}{\log n}\right).$$
  \label{betastarlemma}
\end{lemma}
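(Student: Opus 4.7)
By the definition of $\beta^{*}_{p}(n)$, this quantity equals $1$ exactly when $p\equiv 1\m{4}$ and vanishes otherwise, so the sum in question collapses to
$$\sum_{n^{2/3} \leq p \leq 2n} \beta^{*}_{p}(n)\log p \;=\; \sum_{\substack{n^{2/3}\leq p\leq 2n\\ p\equiv 1\m{4}}} \log p.$$
The plan is simply to evaluate this partial Chebyshev sum along the arithmetic progression $1\m{4}$. I would then be done by subtraction between the two endpoints.

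The first step is to invoke the Prime Number Theorem for arithmetic progressions in its Chebyshev-type form: for the class $1\m{4}$ we have
$$\theta_1(x):=\sum_{\substack{p\leq x\\ p\equiv 1\m{4}}}\log p \;=\; \frac{x}{2} + O\!\left(\frac{x}{\log x}\right),$$
which is equivalent to estimate~\eqref{eq:pnt-ap} used earlier in the paper (one passes between $\pi_1$ and $\theta_1$ by standard partial summation; either form suffices and both are available with far better error terms than $x/\log x$).

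The second step is to apply this at $x=2n$ and at $x=n^{2/3}$ and subtract:
$$\theta_1(2n)-\theta_1(n^{2/3}) \;=\; n - \frac{n^{2/3}}{2} + O\!\left(\frac{n}{\log n}\right) + O\!\left(\frac{n^{2/3}}{\log n}\right) \;=\; n + O\!\left(\frac{n}{\log n}\right),$$
where the $n^{2/3}$ terms are absorbed into the error since $n^{2/3}=o(n/\log n)$.

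There is no real obstacle: the only thing worth checking is the consistency of the definition of $\beta_p^{*}(n)$, namely that for every prime $p\equiv 1\m{4}$ with $p\leq 2n$ we actually have $\beta_p(n)\geq 1$, so counting this contribution as $1$ is meaningful. This is immediate: the two roots $\nu_1,\nu_2$ of $x^2\equiv -1\m{p}$ in $[1,p-1]$ satisfy $\nu_1+\nu_2=p$, so $\min(\nu_1,\nu_2)\leq (p-1)/2\leq n$, hence $p$ divides $\nu^2+1$ for some $\nu\leq n$.
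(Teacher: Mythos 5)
Your proposal is correct and follows essentially the same route as the paper: both reduce the sum to the Chebyshev-type sum $\sum_{p\equiv 1\m{4},\, n^{2/3}\leq p\leq 2n}\log p$ and evaluate it via the Prime Number Theorem for arithmetic progressions (the paper does the partial summation against $\pi_1$ explicitly after extending the range at a cost of $O(n^{2/3})$, while you quote the equivalent $\theta_1(x)=x/2+O(x/\log x)$ and subtract at the two endpoints). The extra check that $\beta_p(n)\geq 1$ is harmless but not needed for this lemma, since $\beta^{*}_{p}(n)=1$ holds by definition for $p\equiv 1\m{4}$.
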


\begin{proof}
  Summing by parts and using estimate~\eqref{eq:pnt-ap} for $\pi_1(x)$
we get:
  \begin{align*}
    \sum_{n^{2/3} \leq p \leq 2n} \beta^{*}_{p}(n) \log p
    & = \sum_{\substack{n^{2/3} \leq p \leq 2n \\ p \equiv 1 \m{4}}}
\log p\\
    & = \sum_{\substack{p \leq 2n \\ p \equiv 1 \m{4}}} \log p +
O(n^{2/3})\\
    & = \log{(2n)}\, \pi_1(2n) - \int_{2}^{2n} \frac{\pi_1(t)}{t} \ud
t +
    O(n^{2/3})\\
    & = n + O\left(\frac{n}{\log n}\right).
  \end{align*}
\end{proof}

Finally, we deal with the contribution of the coefficients
$\alpha^{*}_{p}$. In this point we need to take care of the error term
in a more detailed way:

\begin{lemma}
  For any $\delta < 8/9$ the following estimate holds:
  $$\sum_{n^{2/3} \leq p \leq 2n} \alpha^{*}_{p}(n) \log p =
  \sum_{n^{2/3}\leq p \leq 2n}
\frac{\left(1+\big(\frac{-1}{p}\big)\right)n\log
  p}{p-1} +O\left(\frac{n}{(\log n)^{\delta/2}}\right).$$
  \label{alphastarlemma}
\end{lemma}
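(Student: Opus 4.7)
First I would express $\alpha^*_p(n)$ in a form that separates the main term from a fluctuation. For $p\equiv 1\pmod 4$ with $n^{2/3}\le p\le 2n$, let $\nu_1,\nu_2=p-\nu_1$ be the two solutions of $\nu^2\equiv -1\pmod p$; counting $i\in[1,n]$ with $i\equiv\nu_j\pmod p$ gives
\[
\alpha^*_p(n)=\frac{2n}{p}+\sum_{j=1,2}\psi_{c(p)}(\nu_j/p),\qquad c(p):=\begin{cases}\{n/p\}&\text{if }p\le n,\\ n/p&\text{if }p>n,\end{cases}
\]
where $\psi_c(y):=\mathbf{1}[y\le c]-c$ has mean zero and total variation $1$ on $[0,1]$. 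Replacing $2n/p$ by $2n/(p-1)$ costs $\sum_p 2n\log p/(p(p-1))=O(n^{1/3})$, which is absorbed in the error, so the claim reduces to the bound
\[
E:=\sum_{\substack{n^{2/3}\le p\le 2n\\p\equiv 1\,(4)}}\log p\sum_{j=1,2}\psi_{c(p)}(\nu_j/p)=O\!\left(\tfrac{n}{(\log n)^{\delta/2}}\right).
\]

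Were $c(p)$ independent of $p$, Lemma~\ref{integrationlemma} applied to $g=\psi_{c_0}$ (with $\int_0^1 g=0$) would give $E=O(n/(\log n)^{1+\delta})$ in one shot. To handle the actual $p$-dependence I would partition the range by $k=\lfloor n/p\rfloor$ into intervals $I_k=(n/(k+1),n/k]$, on which $c(p)=n/p-k$ varies monotonically from $1$ to $0$. Inside each $I_k$ with $k$ below a cut-off $K$, I would subdivide further into $M$ pieces where $c(p)$ varies by at most $1/M$, fix a representative $c_0$ on each sub-piece, and apply Lemma~\ref{integrationlemma} to $\psi_{c_0}$. The replacement error $\psi_{c(p)}-\psi_{c_0}$ has magnitude $\le 1/M$ except where $y$ lies in a zone of length $\le 1/M$ around $c_0$; the contribution of that zone is controlled by a further application of Lemma~\ref{integrationlemma} to the indicator function of the zone. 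For primes below the cut-off, $p<n/K$, the trivial bound $|\sum_j\psi_{c(p)}|\le 2$ combined with the Prime Number Theorem yields a contribution of size $O(n/K)$.

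The main obstacle is to calibrate $K$ and $M$ against three competing errors: (i) the aggregate cost of the many sub-piece applications of Lemma~\ref{integrationlemma}, (ii) the approximation error from replacing $c(p)$ by $c_0$, and (iii) the trivial bound on small primes. Because up to $O(n^{1/3})$ intervals $I_k$ are available, the fine subdivision must be confined to $k\le K$ with $K$ only polylogarithmic, and the ``bad-count'' in step (ii) has to be estimated sharply rather than trivially. Inserting the discrepancy bound from Theorem~\ref{theoremkosuke}, valid for any $\delta<8/9$, the optimal choice of parameters produces the saving $(\log n)^{-\delta/2}$ claimed in the lemma.
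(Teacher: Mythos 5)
Your proposal is correct and follows essentially the same route as the paper: the same decomposition of $\alpha^*_p(n)$ into the main term $2n/p$ plus a mean-zero sawtooth evaluated at $\nu/p$, a partition of the medium primes into roughly $(\log n)^{\delta}$ pieces on which $n/p$ is nearly constant (your two-level split by $\lfloor n/p\rfloor$ and then by $\{n/p\}$ in steps of $1/M$, with $K\approx M\approx(\log n)^{\delta/2}$, reproduces the paper's intervals $L_i$ together with the trivial cutoff at $A=n/(\log n)^{\delta/2}$), piecewise application of Lemma~\ref{integrationlemma} to the constantized sawtooth, a second application to the indicator of the short ``bad'' zone to control the replacement error, and a trivial bound below the cutoff. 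The calibration you leave implicit does close: it gives $O\left(n\log\log n/(\log n)^{\delta/2}\right)$, and the $\log\log n$ factor is absorbed exactly as in the paper by slightly increasing $\delta$ within the open range $\delta<8/9$.
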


\begin{proof}
  Using~(\ref{alphaestimate}) and noting that $\nu_1 +\nu_2 = p$,
where
  $1\leq \nu_1,\nu_2 \leq p$ are solutions of $i^2 \equiv -1 \m{p}$,
we get
\begin{align*}
  \alpha^{*}_p(n)
  & = 2 + \left\lfloor \frac{n - \nu_1}{p}\right\rfloor + \left\lfloor
\frac{n - \nu_2}{p}
  \right\rfloor \\
  & = 2 + \frac{2n}{p} - \frac{\nu_1 + \nu_2}{p} - \left\{ \frac{n -
\nu_1}{p}
  \right\} - \left\{ \frac{n - \nu_2}{p} \right\} \\
  & = \frac{2n}{p} + \frac{1}{2}  - \left\{ \frac{n - \nu_1}{p}
  \right\} + \frac{1}{2} - \left\{ \frac{n - \nu_2}{p} \right\},
\end{align*}
so the sum over all primes in the interval $[n^{2/3}, 2n]$ is equal to
\begin{align*}
  \sum_{n^{2/3}\leq p \leq 2n} \hspace{-10pt}\alpha^{*}_p(n) \log{p}
 = \hspace{-11pt}\sum_{n^{2/3} \leq p \leq 2n} \frac{\left(1 +
\big(\frac{-1}{p}\big)\right)n \log{p}}{p}
 + \hspace{-11pt}\sum_{\substack{n^{2/3} \leq p \leq 2n \\ \nu^2
\equiv -1 \m{p} \\ 0
\leq \nu < p}} \hspace{-10pt}\log{p} \left(\frac{1}{2} - \left\{
\frac{n - \nu}{p}
\right\}\right).
\end{align*}
We rewrite
$$ \sum_{n^{2/3} \leq p \leq 2n} \frac{\left(1 +
\big(\frac{-1}{p}\big)\right)n \log{p}}{p} =
\sum_{n^{2/3} \leq p \leq 2n} \frac{\left(1 +
\big(\frac{-1}{p}\big)\right)n \log{p}}{p-1}
+O(n^{1/3}\log n)$$
and
$$\sum_{\substack{n^{2/3} \leq p \leq 2n \\ \nu^2 \equiv -1 \m{p} \\ 0
\leq
\nu < p}} \hspace{-10pt}\log{p} \left(\frac{1}{2} - \left\{ \frac{n -
\nu}{p} \right\}\right)
= \log{n}\hspace{-10pt}\sum_{\substack{0 < \nu < p \leq 2n \\ \nu^2
\equiv
-1 \m{p}}} \hspace{-10pt}\left(\frac{1}{2} - \left\{ \frac{n - \nu}{p}
\right\}\right) + O\left(\frac{n}{\log{n}}\right).$$
Notice that for any sequence $a_p$ satisfying $a_p \ll 1$ we have by a
summing by
parts argument that
$$ \sum_{p<x} a_p \log{p} = \log{x} \sum_{p<x}a_p -
\int_{1}^{x}\frac{1}{t} \sum_{p < t} a_p \ud t = \log{x} \sum_{p<x}a_p
+
O\left(\frac{x}{\log x}\right).$$
In order to get the claimed bound, it remains to show that
$$\sum_{\substack{0 < \nu < p \leq 2n \\ \nu^2 \equiv -1 \m{p}}}
\hspace{-10pt}\left(\frac{1}{2} - \left\{ \frac{n - \nu}{p}
\right\}\right)
= O\left(\frac{n}{(\log n)^{1+\delta/2}}\right).$$
To do that, we divide the summation interval into $1+H$ parts $[1,2n]
=
[1,A]\cup L_1 \cup \dots \cup L_H$, where
$$L_i = \left(\frac{2nAH}{2n(H-i+1) + A(i-1)},
\frac{2nAH}{2n(H-i)+Ai}\right]
.$$
We choose $A=\lfloor n/(\log n)^{\delta/2}\rfloor$ and $H=\lfloor
(\log n)^{\delta}\rfloor$ in order to
minimize the error term, but we continue using these notations for the
sake of conciseness.

Observe that in every of these parts, except the first one, $n/p$ is
almost constant, which enables to use the fact that $\nu/p$ is well
distributed. More precisely, if $p \in L_i$ then
$$\frac{n}{p} \in [\lambda_i, \lambda_{i-1}) :=
\left[\frac{2n(H-i) + Ai}{2AH}, \frac{2n(H-i+1) +
A(i-1)}{2AH}\right),$$
and the length of such interval is small: $|[\lambda_i,
\lambda_{i-1})| = \frac{2n - A}{2AH}.$ We would then like to replace
$\frac{n}{p}$ by $\lambda_i$
whenever $\frac{n}{p}\in [\lambda_i, \lambda_{i-1})$ using
\begin{equation}
  \label{constantisation}
  \left\{ \frac{n}{p} - \frac{\nu}{p} \right\} = \left\{\lambda_i -
\frac{\nu}{p} \right\} + \left\{\frac{n}{p} - \lambda_i\right\},
\end{equation}
but this equality does not hold if $\lambda_i < \frac{\nu}{p} + k <
\frac{n}{p}$ for some integer $k$. We extend this
interval to $\lambda_i \leq \frac{\nu}{p} +k \leq \lambda_{i-1}$,
rewrite it as $\frac{\nu}{p}
\in [\lambda_i, \lambda_{i-1}]_1$ and split the previous sum into
three parts:
\begin{equation*}
  \sum_{\substack{0 < \nu < p \leq 2n \\ \nu^2 \equiv -1 \m{p}}}
  \left(\frac{1}{2} - \left\{ \frac{n-\nu}{p} \right\}\right) =
\Sigma_1 + \Sigma_2 +
  \Sigma_3 + O(\pi_1(A)),
\end{equation*}
where $\Sigma_1,\,\Sigma_2$ and $\Sigma_3$ are defined as
\begin{align*}
  \Sigma_1
  & = \sum_{i=1}^{H} \sum_{\substack{0 \leq \nu
  < p \in L_i \\ \nu^2 \equiv -1 \m{p}}} \left(\frac{1}{2} -
  \left\{\lambda_i
  - \frac{\nu}{p} \right\} \right), \\
  \Sigma_2
  & = \sum_{i=1}^{H} \sum_{\substack{0 \leq \nu
  < p \in L_i\\ \nu^2 \equiv -1 \m{p} \\ \frac{\nu}{p} \not \in
  [\lambda_i, \lambda_{i-1}]_1}} \left(\left\{ \lambda_i -
  \frac{\nu}{p} \right\} - \left\{ \frac{n}{p} - \frac{\nu}{p}
  \right\}  \right),\\
  \Sigma_3
  & = \sum_{i=1}^{H} \sum_{\substack{0 \leq \nu < p \in L_i\\ \nu^2
\equiv
  -1 \m{p} \\ \frac{\nu}{p} \in [\lambda_i, \lambda_{i-1}]_1 }}
\left(\left\{ \lambda_i -
  \frac{\nu}{p} \right\} - \left\{ \frac{n}{p} - \frac{\nu}{p}
\right\}
  \right).
\end{align*}

Recall that $A = n/(\log n)^{\delta/2} +O(1)$ and $H = (\log
n)^{\delta} +
O(1)$, so $\pi_1(A) = O(n/(\log n)^{1+\delta/2})$. We now estimate
each
of the sums $\Sigma_1, \Sigma_2, \Sigma_3$ separately, making use of
Lemma~\ref{integrationlemma}. For the first one note that
$$\int_{0}^{1} \left(\frac{1}{2} - \left\{ \lambda_i - t \right\}
\right)
\ud t= 0,$$
so we get
\begin{align*}
  \Sigma_1
  & = \sum_{i=1}^{H} \sum_{\substack{0 \leq \nu
  < p \in L_i \\ \nu^2 \equiv -1 \m{p}}} \left( \frac{1}{2} - \left\{
\lambda_i -
  \frac{\nu}{p} \right\} \right) \\
  & = \sum_{i=1}^{H} \; O\left(\frac{2nAH}{2n(H-i) + Ai} \bigg/
\left(\log
  \frac{2nAH}{2n(H-i) + Ai}\right)^{1+\delta} \right) \tag{$*$}\\
  & = O\left( \frac{2nAH}{(\log n)^{1+\delta}} \int_0^{H}
  \frac{\ud i}{2n(H-i) + Ai}\right) \\
  & = O\left( \frac{2nAH}{(\log n)^{1+\delta}} \frac{\log 2n/A}{2n-A}
  \right)  = O\left( \frac{n \log \log n}{(\log n)^{1+\delta/2}}
\right).
\end{align*}
For the second sum we use Equation~\eqref{constantisation}:
\begin{align*}
  \Sigma_2
  & = \sum_{i=1}^{H} \sum_{\substack{0 \leq \nu
  < p \in L_i\\ \nu^2 \equiv -1 \m{p} \\ \frac{\nu}{p} \not \in
  [\lambda_i, \lambda_{i-1}]_1}} \left\{ \frac{n}{p} -
  \lambda_i \right\} \\
  & \leq \sum_{i=1}^{H} \sum_{\substack{0 \leq \nu
  < p \in L_i\\ \nu^2 \equiv -1 \m{p}}} |[\lambda_i, \lambda_{i-1}]|
  \tag{$**$}\\
  & \leq \frac{2n - A}{2AH} 2\pi_1(2n)= O \left( \frac{n}{(\log n)^{1
+ \delta/2}} \right).
\end{align*}
Finally, for the third sum we use the notation
$\mathbb{I}_{[\lambda_i,
\lambda_{i-1}]_1}$ for the indicator function of the interval
$[\lambda_i,
\lambda_{i-1}]$ modulo $1$, which satisfies
$$\int_{0}^{1} \mathbb{I}_{[\lambda_i, \lambda_{i-1}]_1 }(t) \ud t =
|[\lambda_i, \lambda_{i-1}]|,$$
so using Lemma~\ref{integrationlemma} we get
\begin{align*}
  \Sigma_3
  & \ll \sum_{i=1}^{H} \sum_{\substack{0 \leq \nu
  < p \in L_i\\ \nu^2 \equiv -1 \m{p} \\ \frac{\nu}{p} \in [\lambda_i,
\lambda_{i-1}]_1 }} 1 = \sum_{i=1}^{H} \sum_{\substack{0 \leq \nu
  < p \in L_i\\ \nu^2 \equiv -1 \m{p} }} \mathbb{I}_{[\lambda_i,
\lambda_{i-1}]_1}\left(\frac{\nu}{p}\right) \\
  & = \sum_{i=1}^{H} 2\pi_i(L_i)|[\lambda_i, \lambda_{i-1}]| +
  O\left(\frac{2nAH}{2n(H-i) + Ai} \bigg/ \left(\log
\frac{2nAH}{2n(H-i) +
  Ai}\right)^{1+\delta} \right). \\
  & = O\left( \frac{n \log \log n}{(\log n)^{1+\delta/2}} \right).
  \qquad{\text{(continuing as from ($*$) and ($**$))}}
\end{align*}
Finally, we note that any function $f$ satisfying $f(n) = O\left(
\frac{n
\log \log n}{(\log n)^{1+\delta/2}}\right)$ for every $\delta < 8/9$
also satisfies
$f(n)= O\left( \frac{n}{(\log n)^{1+\delta/2}}\right)$ for every
$\delta < 8/9$, hence this concludes the proof.
\end{proof}

\section{Proof of theorem~\ref{theoremone}}\label{sec:proof}

Combining results from Lemmas~\ref{smallprimeslemma},
\ref{highdegreeslemma}, \ref{betastarlemma} and \ref{alphastarlemma}, and taking $\theta=\delta/2$,
we get
that for any constant $\theta<4/9$
\begin{equation}
  \log L_n =2n\log n -n \left(1+\frac{\log 2}{2}+\sum_{2< p \leq 2n}
  \frac{\big(1+\big(\frac{-1}{p}\big)\big)\log p}{p-1}\right)
+O\left(\frac{n}{(\log
  n)^{\theta}}\right).
  \label{final}
\end{equation}
Note that,
$$\sum_{2< p \leq 2n} \frac{\big(1+\big(\frac{-1}{p}\big)\big)\log
p}{p-1}=\sum_{2< p \leq 2n} \frac{\log p}{p-1}+\sum_{2< p \leq 2n}
\frac{\big(\frac{-1}{p}\big)\log p}{p-1},$$
where the second sum is bounded, since the sum over all primes is
convergent. We finish the proof by estimating both sums in two
following
lemmas:
\begin{lemma} \label{last_sum_lemma}
The following estimate holds:
$$\sum_{ 2 < p \leq 2n} \frac{\log p}{p-1}= \log n - \gamma
+O\left(\frac{1}{\log n}
\right).$$
\end{lemma}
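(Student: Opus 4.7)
The plan is to relate the sum to the von Mangoldt sum $\sum_{n\leq x}\Lambda(n)/n$ (where $\Lambda(p^k)=\log p$ on prime powers and $0$ otherwise, so $\psi(x)=\sum_{n\leq x}\Lambda(n)$) and then invoke the standard Prime Number Theorem asymptotic for that quantity, specializing at $x=2n$.

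First, using the geometric series
$$\frac{\log p}{p-1}=\log p\sum_{k\geq 1}\frac{1}{p^k},$$
one obtains
$$\sum_{p\leq x}\frac{\log p}{p-1}=\sum_{\substack{p\leq x\\ k\geq 1}}\frac{\log p}{p^k}=\sum_{n\leq x}\frac{\Lambda(n)}{n}+R(x),$$
where $R(x)$ is the contribution of pairs $(p,k)$ with $p\leq x<p^k$. Splitting the range of $p$ at $\sqrt{x}$: for $p\leq\sqrt{x}$ the tail in $k$ past $\log x/\log p$ is a geometric series of first term $\ll\log p/x$, and for $\sqrt{x}<p\leq x$ only the terms $k\geq 2$ remain and contribute $\ll\sum_{p>\sqrt{x}}\log p/p^2$. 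Both give $R(x)=O(x^{-1/2}\log x)$, which is absorbed in the desired error.

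For the main term, Abel summation gives
$$\sum_{n\leq x}\frac{\Lambda(n)}{n}=\frac{\psi(x)}{x}+\int_{1}^{x}\frac{\psi(t)}{t^{2}}\,dt,$$
so writing \eqref{PNT} as $\psi(t)=t+E(t)$ with $E(t)=O(t/(\log t)^\kappa)$ (taking $\kappa$ large enough that the integral $\int_1^\infty E(t)/t^2\,dt$ converges absolutely) yields
$$\sum_{n\leq x}\frac{\Lambda(n)}{n}=\log x+C+O\!\left(\frac{1}{(\log x)^{\kappa-1}}\right),$$
where $C=1+\int_1^\infty E(t)/t^2\,dt$ is an absolute constant. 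The identification $C=-\gamma$ is classical: it can be deduced by Abel-summing the identity $\log n=\sum_{d\mid n}\Lambda(d)$ against Stirling's formula $\sum_{n\leq x}\log n=x\log x-x+O(\log x)$ together with the harmonic asymptotic $\sum_{n\leq x}1/n=\log x+\gamma+O(1/x)$, or equivalently from the Laurent expansion of $-\zeta'/\zeta$ at $s=1$.

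Finally, specializing at $x=2n$ and subtracting the $p=2$ contribution, which equals exactly $\log 2$, gives
$$\sum_{2<p\leq 2n}\frac{\log p}{p-1}=\log(2n)-\gamma-\log 2+O\!\left(\frac{1}{\log n}\right)=\log n-\gamma+O\!\left(\frac{1}{\log n}\right).$$
The only delicate step is the identification of the constant as $-\gamma$; once that is granted, everything else is routine bookkeeping of error terms.
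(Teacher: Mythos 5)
Your proposal is correct and follows essentially the same route as the paper: expand $\frac{\log p}{p-1}$ as a geometric series to reduce to $\sum_{p^j\le x}\log p/p^j=\sum_{m\le x}\Lambda(m)/m$ plus a tail of size $O(x^{-1/2}\log x)$, apply Abel summation with $\psi(t)=t+E(t)$ from the Prime Number Theorem, identify the constant as $-\gamma$ via the classical Mertens-type asymptotic, and finally set $x=2n$ and subtract the $p=2$ term $\log 2$. No substantive differences worth noting.
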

\begin{proof}
This estimate is well-known. However, for completeness, we include a
detailed proof. Write
\begin{align*}
  \sum_{p \leq x} \frac{\log p}{p(1-\frac{1}{p})}
  &=\sum_{p \leq x} \frac{\log
p}{p}\left(1+\frac{1}{p}+\frac{1}{p^2}+\cdots\right)\\
  &=\sum_{p^j \leq x} \frac{\log p}{p^j}+\sum_{\substack{p^j > x \\ p
\leq
  x}}\frac{\log p}{p^j}.
\end{align*}
By Merten's theorem
$$\sum_{p^j \leq x} \frac{\log p}{p^j} = \log x -\gamma +o(1),$$
and the error term can be improved using Prime Number Theorem in the
form
(\ref{PNT}) and summation by parts:
\begin{align*}
\sum_{p^j \leq x} \frac{\log p}{p^j}
  &=\frac{\psi(x)}{x}+\int_2^x\frac{\psi(t)}{t^2}\ud t \\
  &=1 + O\left( \frac{1}{\log x} \right) +
\int_2^x\frac{1}{t}dt+\int_2^x\frac{E(t)}{t^2}dt\\
  &=1 + O\left(  \frac{1}{\log x} \right) + \log x - \log 2 +
\int_2^{\infty}\frac{E(t)}{t^2}dt -
\int_x^{\infty}\frac{E(t)}{t^2}dt\\
  &=\log x - \gamma + O\left(  \frac{1}{\log x} \right) .
\end{align*}
For the second term we have
\begin{align*}
  \sum_{\substack{p^j > x \\ p \leq x}}\frac{\log p}{p^j}
  & = \sum_{p\leq x}\frac{\log p}{p^{\left\lfloor\frac{\log x}{\log p}
  \right\rfloor }\cdot
p}\left(1+\frac{1}{p}+\frac{1}{p^{2}}+\cdots\right)\\
  & \leq \frac{1}{x^{1/2}}\sum_{p\leq x}\frac{\log
p}{p\left(1-\frac{1}{p}\right)}\\
  & =O\bigg(\frac{\log x}{x^{1/2}}\bigg).
\end{align*}
In our case we get
$$\sum_{ 2 < p \leq 2n} \frac{\log p}{p-1}= \sum_{ p \leq 2n}
\frac{\log p}{p-1} -\log 2 = \log n - \gamma +O\left(\frac{1}{\log n}
\right).$$
\end{proof}

\subsection*{\textbf Remark:} The error term in the previous lemma can
be sharpened to $O(\exp(-c\sqrt{\log n}))$ (where $c$ is a constant)
using the estimate
$$\sum_{p^j \leq x} \frac{\log p}{p^j} = \log x - \gamma +O(\exp(-c\sqrt{\log x}))$$
which can be found, for instance, in~\cite{mult-num-theory} (Exercise $4$, page $182$).

\begin{lemma}
  The following estimate holds:
  $$\sum_{2< p \leq 2n} \frac{\big(\frac{-1}{p}\big)\log
  p}{p-1}=\sum_{p\neq 2} \frac{\big(\frac{-1}{p}\big)\log p}{p-1} +
  O\left(\frac{1}{ \log n} \right).$$
  \label{last_sum_lemma_2}
\end{lemma}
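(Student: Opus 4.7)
The plan is to reduce the identity to a tail estimate of the form
$$\sum_{p>2n}\frac{\big(\frac{-1}{p}\big)\log p}{p-1}=O\!\left(\frac{1}{\log n}\right),$$
from which the claim follows by writing $\sum_{2<p\le 2n}=\sum_{p\ne 2}-\sum_{p>2n}$. The natural tool is the Prime Number Theorem in arithmetic progressions modulo $4$: since $\big(\frac{-1}{p}\big)$ is the unique nontrivial Dirichlet character $\chi_{4}$ on odd primes, PNT for APs (used already in~\eqref{eq:pnt-ap}) gives
$$S(x):=\sum_{2<p\le x}\Big(\tfrac{-1}{p}\Big)\log p\ll\frac{x}{(\log x)^{A}}$$
for any fixed $A>0$.

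I would then apply Abel summation with weight $f(t)=1/(t-1)$ to obtain
$$\sum_{2n<p\le X}\frac{\big(\frac{-1}{p}\big)\log p}{p-1}=\frac{S(X)}{X-1}-\frac{S(2n)}{2n-1}+\int_{2n}^{X}\frac{S(t)}{(t-1)^{2}}\,\ud t.$$
Letting $X\to\infty$, the first term vanishes, the boundary contribution at $2n$ is $O(1/(\log n)^{A})$, and the integral is bounded by
$$\int_{2n}^{\infty}\frac{|S(t)|}{(t-1)^{2}}\,\ud t\ll\int_{2n}^{\infty}\frac{\ud t}{t(\log t)^{A}}\ll\frac{1}{(\log n)^{A-1}}.$$
Taking $A=2$ already suffices to yield the required $O(1/\log n)$ tail, and the same computation shows that $\sum_{p\ne 2}\big(\frac{-1}{p}\big)\log p/(p-1)$ converges.

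The only substantive ingredient is the bound on $S(x)$, but this is a standard consequence of the non-vanishing of $L(s,\chi_{4})$ on $\operatorname{Re} s=1$. As in the remark following Lemma~\ref{last_sum_lemma}, invoking classical zero-free regions would allow replacing $(\log x)^{-A}$ by $\exp(-c\sqrt{\log x})$, producing a correspondingly sharper tail. I do not foresee any real obstacle beyond quoting the appropriate form of PNT for APs: once $S(x)$ has enough cancellation, the argument is a routine partial summation.
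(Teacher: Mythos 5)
Your argument is correct, but it is organized differently from the paper's. The paper does not estimate the tail directly: it writes the character sum as a difference of two sums over the progressions $p\equiv 1$ and $p\equiv 3 \m{4}$, runs the partial-summation machinery of Lemma~\ref{last_sum_lemma} on each one separately using $\psi_1(x)=\tfrac{x}{2}+O\big(x/(\log x)^2\big)$ (and likewise $\psi_3$), obtains $\tfrac12\log n+C_1+O(1/\log n)$ and $\tfrac12\log n+C_3+O(1/\log n)$, and then identifies $C_1-C_3$ with the infinite sum $\sum_{p\neq 2}\big(\tfrac{-1}{p}\big)\log p/(p-1)$. You instead let the cancellation happen at the level of the signed prime sum $S(x)=\sum_{2<p\le x}\chi_4(p)\log p\ll x/(\log x)^A$ (which is exactly $\psi_1-\psi_3$ up to negligible prime-power terms) and then bound the tail $\sum_{p>2n}$ by one Abel summation against $1/(t-1)$; the computation $\int_{2n}^{\infty} t^{-1}(\log t)^{-A}\ud t\ll (\log n)^{1-A}$ with $A=2$ is right, and the same estimate gives convergence of the full series, which the paper merely asserts. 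The trade-off: your route is more direct, avoids introducing and matching the constants $C_1,C_3$, and makes transparent that the error improves to $O(\exp(-c\sqrt{\log n}))$ with the classical zero-free region for $L(s,\chi_4)$; the paper's route buys uniformity of exposition, reusing Lemma~\ref{last_sum_lemma} verbatim for each residue class. Both ultimately rest on the same input, the Prime Number Theorem for the fixed modulus $4$, so there is no gap in your proposal.
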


\begin{proof}
  We know that
$$\sum_{2< p \leq 2n} \frac{\big(\frac{-1}{p}\big)\log p}{p-1}=\sum_{p
\neq
2} \frac{\big(\frac{-1}{p}\big)\log p}{p-1}+o(1),$$
since the sum over all primes is convergent. Alternatively, we can
express the sum as follows
$$\sum_{2< p \leq 2n} \frac{\big(\frac{-1}{p}\big)\log
p}{p-1}=\sum_{\substack{2< p \leq 2n\\p\equiv 1\bmod{4}}} \frac{\log
p}{p-1}-\sum_{\substack{2< p \leq 2n\\ p\equiv 3\bmod{4}}} \frac{\log
p}{p-1}.$$
It follows from Prime Number Theorem in arithmetic progressions that
$$\psi_1(x) :=\sum_{\substack{p^j < x\\ p\equiv 1\bmod 4}}\log
p=\frac{x}{2}+E_1(x),\,\, E_1(x)=O\left(\frac{x}{\left(\log
x\right)^{2}}\right).$$
(equivalently for $\psi_3(x)$, where the summation is over primes
equivalent to $3$ modulo 4). We
can use this as in proof of Lemma~\ref{last_sum_lemma} to get

\begin{align*}
\sum_{\substack{2< p \leq 2n\\p\equiv 1\bmod{4}}} \frac{\log p}{p-1}&=
\frac{1}{2}\log n+C_1+ O\left( \frac{1}{\log  n} \right),\\
\sum_{\substack{2< p \leq 2n\\ p\equiv 3\bmod{4}}} \frac{\log
p}{p-1}&=\frac{1}{2}\log n+C_3 + O\left( \frac{1}{\log  n} \right).
\end{align*}
The difference $C_1 - C_3$ then has to be equal to $\sum_{p\neq 2}
\frac{\big(\frac{-1}{p}\big)\log p}{p-1}$ and we get the required
convergence rate.
\end{proof}

\subsection*{\textbf Acknowledgments:} This work was done during second
author's visit at Universidad Aut\'{o}noma de Madrid in Winter of
2011. He would like to thank people of Mathematics Department and
especially Javier Cilleruelo for their warm hospitality.
The authors are also grateful for his advice and helpful
suggestions in the preparation of this paper.

\smallskip

Pieter Moree is greatly thanked for noticing the connection between the
constant~\eqref{B:x^2+1} and the non-hypotenuse numbers, for pointing reference~\cite{mult-num-theory} in relation to Lemma~\ref{last_sum_lemma}
and also for useful comments.

\smallskip

The first author is supported by a JAE-DOC grant from the JAE program
in CSIC, Spain. The last author is supported by Departamento de
Matem\'aticas of Universidad Autónoma de Madrid, Spain.

\small
\bibliography{lcm}
\bibliographystyle{abbrv}

\end{document}